\theoremstyle{theorem}
\newtheorem{theorem}{Theorem}
\theoremstyle{definition}
\newtheorem*{definition}{Definition}
\begin{document}

\title[Stabilizer and zero-in-the-corner problems]{Undecidability of the stabilizer and zero-in-the-corner problems for matrix groups}
\author[E.\ Breuillard]{Emmanuel Breuillard} \address{Emmanuel Breuillard\hfill\break 	Mathematical Institute \hfill\break Oxford OX1 3LB, United Kingdom} \email{breuillard@maths.ox.ac.uk}
\author[G. Kocharyan]{Georgi Kocharyan} \address{Georgi Kocharyan\hfill\break 	St Catherine College,  \hfill\break Cambridge  CB2 1RL, United Kingdom} \email{gk445@cam.ac.uk}
\maketitle
\begin{abstract}
The upper-left-corner problem and upper-right-corner problem for matrix semigroups with integer entries are known to be undecidable. Here we ask the analogous problems for matrix \textit{groups} and prove their undecidability for rational entries, by reducing to the undecidability of the word problem for groups. To reach this aim, we answer a question of Dixon from 1985 by proving the undecidability of the stabilizer problem for matrix groups.
\end{abstract}

\noindent


\section{Introduction.}
Few developments in mathematics have had such far-reaching theoretical \textit{and} practical consequences as the first algorithmically undecidable problem posed by Turing in 1936. This served as a springboard for proving the undecidability of many celebrated problems appearing naturally in mathematics, such as the existence of integer solutions to diophantine equations, or the word problem for finitely presented groups. In this note we establish the following undecidability results for subgroups of invertible matrices:

\begin{theorem}\label{ULCP} Let $n \ge 6$ and $G$ be a finitely generated subgroup of  $\mathrm{GL}_n(\mathbb{Q})$. The problem of whether $G$ contains a matrix with a zero entry in its upper left corner is undecidable.
\end{theorem}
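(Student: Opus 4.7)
The plan is to reduce the stabilizer problem for finitely generated subgroups of $\mathrm{GL}_m(\mathbb{Q})$---whose undecidability is proved earlier in the paper in answer to Dixon's question---to the zero-in-the-corner problem. Given an instance $(H,v)$ of the stabilizer problem, where $H = \langle h_1,\dots,h_k\rangle \le \mathrm{GL}_m(\mathbb{Q})$ and $v\in \mathbb{Q}^m$, I will effectively construct a finitely generated subgroup $\tilde G \le \mathrm{GL}_n(\mathbb{Q})$ such that $\tilde G$ contains a matrix with a zero upper-left corner if and only if $\mathrm{Stab}_H(v) \ne \{I\}$.

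The first step is to pass to the affine-type homomorphism
\[
\phi \colon H \to \mathrm{GL}_{m+1}(\mathbb{Q}), \qquad \phi(h) = \begin{pmatrix} h & (h-I)v \\ 0 & 1 \end{pmatrix},
\]
which is indeed multiplicative since $(gh-I)v = g(h-I)v + (g-I)v$. Under $\phi$, the condition $hv = v$ corresponds to the simultaneous vanishing of the first $m$ entries of the last column of $\phi(h)$; this is a \emph{vector} condition, whereas the zero-in-the-corner problem asks about a single \emph{scalar}. The technical heart of the argument is to compress the former into the latter.

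For this I would pass to the tensor (or symmetric) square $\sigma := \phi \otimes \phi \colon H \to \mathrm{GL}_{(m+1)^2}(\mathbb{Q})$. The quantity
\[
\|(h-I)v\|^2 \;=\; \sum_{i=1}^m \phi(h)_{i,m+1}^2 \;=\; \sum_{i=1}^m \sigma(h)_{(i,i),(m+1,m+1)}
\]
is a rank-one bilinear form in the matrix coefficients of $\sigma$, and therefore equals the $(1,1)$ entry of $P\sigma(h)P^{-1}$ for a suitable rational matrix $P$; explicitly, one takes $P^T e_1 = \sum_{i=1}^m e_{(i,i)}$ and $P^{-1} e_1 = e_{(m+1,m+1)}$ and extends arbitrarily to a rational basis. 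Set $\tilde G := P\sigma(H)P^{-1}$. Because $\mathbb{Q}$ is formally real, a sum of rational squares vanishes if and only if each term does, so an element of $\tilde G$ has zero upper-left corner precisely when the corresponding $h$ satisfies $(h-I)v=0$. Since the identity matrix has $(1,1)$-entry $1$, any such $h$ is automatically non-trivial, matching the stabilizer problem. Padding $\tilde G$ by a block $I_k$ then extends to any $n \ge 6$.

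The main obstacle is precisely the scalarisation step: realising the vanishing of a vector as the vanishing of a single matrix entry forces one to a quadratic representation, and one must verify the rank-one structure of the relevant bilinear form and the existence of a rational basis change placing it into the upper-left corner. A secondary concern is dimension bookkeeping---tuning the construction (for instance via the symmetric square of dimension $\binom{m+2}{2}$ in place of the full tensor square) so that after padding the reduction covers every integer $n\ge 6$, matching the range asserted in the theorem.
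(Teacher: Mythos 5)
There is a genuine gap, and it sits exactly where you located the ``technical heart'': the scalarisation step cannot work as described. If $a^{T}$ denotes the first row of $P$ and $b$ the first column of $P^{-1}$, then $(P\sigma(h)P^{-1})_{1,1}=a^{T}\sigma(h)b$, and $PP^{-1}=I$ forces $a^{T}b=1$. Your prescription $a=\sum_{i\le m}e_{(i,i)}$, $b=e_{(m+1,m+1)}$ has $a^{T}b=0$, so no such $P$ exists. The same obstruction in invariant form: any function of the shape $M\mapsto (PMP^{-1})_{1,1}$ takes the value $1$ at $M=I$, whereas your target $\|(h-I)v\|^{2}$ takes the value $0$ at $h=I$. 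Conceptually, ``zero in the $(1,1)$ corner'' means $Mb$ lands in a hyperplane \emph{not} containing $b$ (the external hyperplane condition), while ``$hv=v$'' is an internal, stabilizer-type condition satisfied by the identity; no linear change of basis converts one into the other. This is precisely why the paper pairs the stabilizer/centralizer problem with the upper-\emph{right}-corner problem (Theorem \ref{URCP}), and proves Theorem \ref{ULCP} by a different reduction: the membership problem for $F_2\times F_2$ (Mihailova) is encoded into the external hyperplane problem via a representation of $F_2$ in $\mathrm{SL}_3(\mathbb{Z})$ preserving an affine cone, so that a linear form satisfies $f(g\vec u)\ge 0$ with equality iff $g=1$ (Theorem \ref{main}). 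That positivity is the linear-form analogue of your sum-of-squares trick, but it is arranged so that the base vector lies \emph{off} the hyperplane, which is what the $(1,1)$-corner condition actually requires.

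Even if the scalarisation could be repaired, the dimension count defeats the statement as claimed. The stabilizer problem is only established as undecidable for $m\ge 9$, and your tensor (resp.\ symmetric) square lands in dimension $(m+1)^{2}\ge 100$ (resp.\ $\binom{m+2}{2}\ge 55$). Padding by identity blocks can only increase $n$, so at best you would obtain undecidability for $n\ge 55$, not for every $n\ge 6$. The paper's construction needs only two $3\times 3$ blocks, one for each factor of $F_2\times F_2$, which is how it reaches $n=6$.
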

\begin{theorem} \label{URCP}
Let $n \geq 9$ and $G$ be a finitely generated subgroup of $\mathrm{GL}_n(\mathbb{Q}).$ The problem of whether $G$ contains a non-identity matrix with a zero entry in its upper right corner is undecidable.
\end{theorem}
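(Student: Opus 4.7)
The plan is to deduce Theorem~\ref{URCP} from Theorem~\ref{ULCP} by adding three auxiliary rows and columns, which accounts for the dimension jump from $n\ge 6$ to $n\ge 9$. So, let $H\le\mathrm{GL}_m(\mathbb{Q})$ with $m\ge 6$ be a finitely generated group whose upper-left-corner problem is undecidable; the goal is to produce a finitely generated $G\le\mathrm{GL}_{m+3}(\mathbb{Q})$ whose upper-right-corner problem is undecidable.

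The elementary but crucial observation is that if $P\in\mathrm{GL}_m(\mathbb{Q})$ is the permutation matrix swapping $e_1$ and $e_m$, then $(hP)_{1,m}=h_{11}$ for every matrix $h$, so ``$h$ has a zero in its upper-left corner'' becomes ``$hP$ has a zero in its upper-right corner''. To realise this inside a genuine finitely generated matrix group, I would first replace $H$ by $\widetilde H:=\langle H, PHP^{-1}\rangle$, which is finitely generated, still has undecidable ULCP (since $H\le\widetilde H$), and is normalised by $P$. The semidirect product $G_0:=\widetilde H\rtimes\langle P\rangle\le\mathrm{GL}_m(\mathbb{Q})$ is then a finitely generated group of index two over $\widetilde H$ whose non-trivial coset consists of elements $hP$ with $(hP)_{1,m}=h_{11}$. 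The difficulty is that elements $h\in\widetilde H$ themselves may ``accidentally'' satisfy $h_{1,m}=0$, producing spurious ``yes'' answers to URCP on $G_0$ that are unrelated to ULCP on $H$. To eliminate these, I would embed $G_0$ into $\mathrm{GL}_{m+3}(\mathbb{Q})$, using the three additional coordinates to carry a coboundary cocycle $c(h)=(h-I)v_0$ for a carefully chosen $P$-fixed vector $v_0\in\mathbb{Q}^m$, together with parity data distinguishing the two cosets of $\widetilde H$. The parameters are to be arranged so that the $(1,m+3)$-entry of the image of $g\in G_0$ vanishes precisely when either $g$ is the identity, or $g=hP$ for some $h\in\widetilde H$ with $h_{11}=0$.

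With the construction in hand, URCP on the image $G\le\mathrm{GL}_{m+3}(\mathbb{Q})$ becomes equivalent to ULCP on $H$, and the result follows from Theorem~\ref{ULCP}. The hard part is the design of the cocycle and parity data so that the auxiliary $(1,m+3)$-entry is non-zero on every non-identity element of the normal copy $\widetilde H\subset G$ while returning $h_{11}$ on the coset $\widetilde HP$; checking this compatibility and the well-definedness of the enlarged homomorphism is, I expect, the most delicate part of the argument, and is what forces the three extra dimensions rather than just one.
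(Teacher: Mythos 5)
Your reduction to Theorem \ref{ULCP} has a genuine gap at its core: the extended representation into $\mathrm{GL}_{m+3}(\mathbb{Q})$ whose $(1,m+3)$-entry vanishes exactly on $\{1\}\cup\{hP:\ h_{11}=0\}$ is never constructed, and the ingredients you name cannot produce it. A coboundary cocycle $g\mapsto (g-I)v_0$ contributes $(gv_0)_1-(v_0)_1$ to the designated entry, and since you require $Pv_0=v_0$ this is the \emph{same} function on both cosets ($c(hP)=hPv_0-v_0=hv_0-v_0=c(h)$); no choice of $v_0$ can make a single matrix coefficient of a homomorphism return ``$h_{11}$'' on $\widetilde HP$ while being ``nonzero unless $h=1$'' on $\widetilde H$, and ``parity data'' is not a mechanism. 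Even the half of the requirement concerning $\widetilde H$ alone --- a rational $v_0$ with $(hv_0)_1\neq(v_0)_1$ for every $h\in\widetilde H\setminus\{1\}$ --- fails in general (any $h$ with first row $(1,0,\dots,0)$ violates it identically) and is not computable from the generators. Two further problems: a many-one reduction needs the instances to have the \emph{same} answer, and passing from $H$ to $\widetilde H=\langle H,PHP^{-1}\rangle$ can change the answer to ULCP, since $\widetilde H$ may acquire a zero $(1,1)$-entry (e.g.\ $(PhP)_{11}=h_{mm}$) when $H$ has none; and your $G$ contains all of $\widetilde H$, so the ``spurious'' non-identity elements of $\widetilde H$ with zero upper-right entry --- which you correctly identify as the obstruction --- are exactly what remains unexcluded.

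This obstruction is why the paper does not derive Theorem \ref{URCP} from Theorem \ref{ULCP} at all. The ``non-identity'' clause turns URCP into an \emph{internal} hyperplane problem (the base vector already lies in the hyperplane), which the authors reduce to the stabilizer problem: they take $\vec v_A$ to be an eigenvector on the cone of a hyperbolic $\phi(A)$, show that the elements of $F_2$ preserving the line $\mathbb{R}\vec v_A$ are exactly the cyclic centralizer $C_{F_2}(A)$, and then use three commuting copies of $F_2$, the group $M_1(G)$, and Theorem \ref{cent} to encode the word problem. The $n\geq 9$ arises as $3\times 3$ from three copies of the cone construction, not from three bookkeeping coordinates appended to a $6$-dimensional instance. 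To salvage your approach you would need a provable device forcing a designated entry to be nonzero on every non-identity element of a given finitely generated matrix group --- which is essentially the stabilizer machinery you would be trying to avoid.
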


In other words, there is no algorithm that upon receiving as input a finite set $X$ of invertible matrices with rational entries, is capable of deciding whether or not the subgroup $\langle X \rangle$ they generate contains a matrix with zero $(1,1)$-entry, and analogously a non-identity one with zero $(1,n)$-entry.

These problems are inspired by the analogous questions for matrix semigroups, which have been known to be undecidable for a long time already for the ring of $3\times 3$-matrices with integer entries $M_3(\mathbb{Z})$, see \cite{paterson1970, manna, halava2001, cassaigne2018}. In fact for each entry $(i,j)$ of the matrix, we have a well-defined $(i,j)$-problem, asking for the existence of a semigroup element with vanishing $(i,j)$-entry. It is easy to see that the on-diagonal problems are all equivalent and so are the off-diagonal problems. It turns out that both are known to be undecidable, see \cite{halava2001}.


For the semigroup problems the proof is based on one of the very first provably undecidable problems in word combinatorics, the so-called \emph{Post Correspondence Problem} or PCP.  The PCP  asks to determine if given a family of dominoes, each bearing some string of zeros and ones on its top tile and also on its  bottom tile, one can lay them down in some order in such a way that the same string is read on the top and on the bottom.  More formally, if $\Sigma_2$ denotes the free semigroup on two letters, the problem  asks whether a finitely generated semigroup of the direct product $\Sigma_2 \times \Sigma_2$ contains an element from the diagonal $\{(x,x) \in \Sigma_2\times \Sigma_2, x \in \Sigma_2\}$. It turns out that one can embed $\Sigma_2 \times \Sigma_2$ inside $M_3(\mathbb{Z})$ in such a way that an element lies in the diagonal if and only if the $(3,2)$-entry of the corresponding matrix vanishes, see \cite[p. 63]{manna}. This allows to encode the PCP inside the $(3,2)$-problem. The on-diagonal problem is dealt with similarly.

In this note, we consider the upper-left-corner problem for matrix groups and show that it is undecidable for at least $17$ matrices of size at least $6$. We also prove this for the upper-right-corner problem for at least $17$ matrices of size at least $9$. The problem for groups cannot be reduced to semigroups. Of course every group is a semigroup, so clearly these problems are related, but the wrong way around! If we had an algorithm for semigroups, we would have one for groups. But as the undecidability of the semigroup version is known \cite{halava2001}, there is no direct consequence for the group version. This gives rise to a host of important differences. For instance:
\begin{itemize}[leftmargin=10pt]
\item In the semigroup case the problems over $\mathbb{Q}$ and $\mathbb{Z}$ were clearly equivalent. 
This is no longer the case for invertible matrices. Here we will only deal with rational entries.
\item The off-diagonal problem needs to be tweaked a little to make sense in the group setting, because obviously the identity matrix has a zero off-diagonal entry. It should ask for a \emph{non-identity} matrix in $G$ with an off-diagonal zero.

\end{itemize}
Such generalizations of semigroup problems to group problems tend to be quite difficult. For example, there is a statement of the PCP for groups \cite{myasnikov2014}, but its decidability is currently an open problem! Two closely related problems were advertised by Dixon in 1985 for matrix groups. The \emph{orbit problem} asks, given a finitely generated subgroup $G$ of $\mathrm{GL}_n(\mathbb{Q})$ and two vectors $u$ and $v$ in $\mathbb{Q}^n,$ whether some element of $G$ maps $u$ onto $v$. Dixon \cite{dixon1985} proved that the orbit problem is undecidable, by reducing to it the membership problem in the product of two free groups (see below). The second problem, the so-called \emph{stabilizer problem}, asks to decide whether $u$ is fixed by some non-identity element of $G$. The question of its undecidability was left open until now. We settle it here:

\begin{theorem} \label{stabilizer}
Let $n \ge 9.$ The stabilizer problem on $\mathbb{Q}^n$ is undecidable.
\end{theorem}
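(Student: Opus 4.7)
The plan is to reduce the word problem for finitely presented groups, which is undecidable by the Novikov--Boone theorem, to the stabilizer problem, with Mikhailova's construction providing the bridge. Take a $2$-generated finitely presented group $\Gamma = \langle X \mid R\rangle$ with undecidable word problem, and write $F = F(X) = F_2$. Mikhailova's subgroup $M_\Gamma \subset F \times F$, generated by $\{(x, x) : x \in X\}$ together with $\{(r, 1) : r \in R\}$, coincides with the ``equality subgroup'' $\{(u, v) \in F \times F : u =_\Gamma v\}$; hence deciding whether $(w, 1) \in M_\Gamma$ for input $w \in F$ is equivalent to the word problem $w =_\Gamma 1$ and so is undecidable. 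Fix a ping-pong embedding $\phi: F \hookrightarrow \mathrm{SL}_2(\mathbb{Z})$ and the block-diagonal extension $\Phi: F \times F \hookrightarrow \mathrm{GL}_4(\mathbb{Z})$, $(a, b) \mapsto \phi(a) \oplus \phi(b)$; the undecidable question becomes: given $w$, is $\Phi((w, 1)) \in \Phi(M_\Gamma)$?

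For each input $w$, the plan is to construct a finitely generated subgroup $G_w \subset \mathrm{GL}_9(\mathbb{Q})$ and a vector $u_w \in \mathbb{Q}^9$ so that $u_w$ has a non-identity stabilizer in $G_w$ iff $w =_\Gamma 1$. The guiding idea is to adjoin to $\Phi(M_\Gamma)$ a unipotent ``shift'' matrix $T_w$, carefully engineered so that $T_w$ and $\Phi((w, 1))$ send $u_w$ to the same point. Then $\gamma_w := T_w^{-1} \Phi((w, 1))$ fixes $u_w$, and when $w =_\Gamma 1$ the matrix $\Phi((w, 1))$ already lies in $\Phi(M_\Gamma)$, so $\gamma_w \in G_w := \langle \Phi(M_\Gamma), T_w\rangle$; it is non-identity because $T_w$ has a nonzero unipotent off-diagonal part while $\Phi((w, 1))$ is block-diagonal. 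To arrange $T_w u_w = \Phi((w, 1)) u_w$ one works in $V = \mathbb{Q}^4 \oplus \mathbb{Q}^4 \oplus \mathbb{Q}$, with $\Phi(M_\Gamma)$ acting via (two suitably chosen copies of) $\Phi$ on the two $\mathbb{Q}^4$ blocks and trivially on the last line, and with $u_w$ and $T_w$ chosen so that the discrepancy $\Phi((w, 1)) u_w - u_w$ is concentrated in directions that $T_w$ can reach via a single unipotent correction. This supplies the forward direction of the reduction.

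The main obstacle is the converse. Writing $G_w$ as a semidirect product $\Phi(M_\Gamma) \ltimes N_w$, where $N_w \subset \mathbb{Q}^8$ is the $\mathbb{Z}[\Phi(M_\Gamma)]$-submodule generated by the shift, the stabilizer of $u_w$ corresponds to pairs $(m, n)$ with $m \in \Phi(M_\Gamma)$ and $n = (I - m) u_w \in N_w$. One must show that such a non-trivial $m$ can exist only when $(w, 1) \in M_\Gamma$, i.e., rule out every spurious $m$ for which $(I - m) u_w$ accidentally lies in $N_w$. This is the delicate step, and requires a careful dynamical analysis of the ping-pong action of $\phi(F) \subset \mathrm{SL}_2(\mathbb{Z})$ on $\mathbb{Q}^2$: the two independent copies of the representation are what supply enough linear constraints to eliminate accidental relations in the finite-dimensional ambient space, and this is what drives the dimension bound up to $n \geq 9$.
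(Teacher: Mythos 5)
Your reduction of the word problem to membership in the Mihailova subgroup is fine, but the heart of your argument --- the converse direction of the reduction to the stabilizer problem --- is missing, and it is precisely the hard part. In your setup $G_w=\Phi(M_\Gamma)\ltimes N_w$ acts affinely, and an element $(m,n)$ stabilizes $u_w$ iff $(I-m)u_w\in N_w$, where $N_w$ is the $\mathbb{Z}[\Phi(M_\Gamma)]$-submodule generated by the translation part of $T_w$. To conclude you must show that when $w\neq_\Gamma 1$ \emph{no} $m\neq I$ in $\Phi(M_\Gamma)$ satisfies $(I-m)u_w\in N_w$. You acknowledge this is ``the delicate step'' and gesture at ``a careful dynamical analysis of the ping-pong action,'' but no such analysis is given, and none is obviously available: $N_w$ contains the entire orbit $\{m'(m_w-I)u_w : m'\in\Phi(M_\Gamma)\}$ and all its integer combinations, so it is a large, hard-to-control subgroup of $\mathbb{Z}^8$, and accidental memberships $(I-m)u_w\in N_w$ are exactly the spurious solutions you need to exclude. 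This affine (Dixon-style) scheme is the one that proves undecidability of the \emph{orbit} problem; the reason the \emph{stabilizer} problem remained open since 1985 is that this scheme does not by itself rule out spurious stabilizing elements. As written, your argument only establishes the easy implication ($w=_\Gamma 1$ implies nontrivial stabilizer), and even there you should note the degenerate case $w=1$ in $F$, where $T_w=I$ and $\gamma_w$ is the identity.

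For comparison, the paper avoids the affine construction entirely. It fixes once and for all the group: a three-fold variant $M_1(G)=\{(x,y,z)\in F_2^3 : x=y,\ (y,z)\in M(G)\}$ of the Mihailova subgroup for a \emph{torsion-free} group $G$ with undecidable word problem (Borisov's group), acting by simultaneous conjugation on triples of trace-zero $2\times 2$ matrices --- this is where $9=3\times 3$ comes from, not $4+4+1$. The input word $w$ is encoded in the \emph{vector} $\bigl(2a-\mathrm{Tr}(a)I,\,2b-\mathrm{Tr}(b)I,\,2w-\mathrm{Tr}(w)I\bigr)$, whose stabilizer is exactly the centralizer of $(a,b,w)$. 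A purely group-theoretic lemma (using that centralizers in $F_2$ of nontrivial elements are cyclic, and that the common centralizer of $a$ and $b$ is trivial) shows this centralizer meets $M_1(G)$ nontrivially iff $w$ is torsion in $G$, i.e.\ iff $w=1$ in $G$. This replaces the intractable module-membership condition in your approach with a clean statement about commuting elements in free groups. If you want to salvage your route, you would need to supply the excluded-middle argument for $(I-m)u_w\in N_w$; otherwise, consider switching to a conjugation action where stabilizers are centralizers and the free-group structure does the work.
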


In fact we will prove that the analogous problem over $\mathbb{Z}$ for $G$ in $\mathrm{GL}_n(\mathbb{Z})$ is already undecidable. The main idea in proof of Theorem \ref{stabilizer} will be essential for the proof of Theorem \ref{URCP}.
Below we offer geometrically flavored proofs for the undecidability of the upper-left-corner problem (Theorem \ref{ULCP}), upper-right-corner problem (Theorem \ref{URCP}) and the stabilizer problem (Theorem \ref{stabilizer})  in  $\mathrm{GL}_n(\mathbb{Q})$. While the PCP was used for the analogous problems on semigroups, we shall use the \textit{word problem} for groups as our source of undecidability. There will be a common feature however in that, in place of a free semigroup embedding, we shall construct a suitable embedding of the free group $F_2$ on $2$ generators inside $\mathrm{GL}_3(\mathbb{Q})$  that will enable us to encode the word problem for groups within the two problems we are considering. We shall require two commuting copies of $F_2$ for Theorem  \ref{ULCP} and three commuting copies for Theorems \ref{URCP} and \ref{stabilizer}.


\section{Word problem and membership problem.}
Let $F_k$ denote the free group on $k$ letters. Any group $G$ generated by $k$ elements can be given a \emph{presentation} as a quotient of $F_k$ in the form:
\begin{equation}\label{pres}G=\langle x_1,\ldots,x_k| r_1=\ldots=r_j=\ldots\rangle\end{equation}
where the $r_j$ are words in $F_k$ called \emph{relators}.
\begin{definition}[Word problem]
The \textit{word problem} for a finitely generated group $G$ asks for an algorithm that given any word $w\in F_k$ in the generators, decides whether or not $w = 1$ in $G.$
\end{definition}
The classic Novikov-Boone theorem (e.g. \cite{rotman1995}) states the existence of a \textit{finitely presented group} for which the word problem is undecidable. The proof is constructive and gives the generators and relations explicitly. The proof of the undecidability of the word problem for groups is much more involved than its counterpart for semigroups, which came earlier in works of Post and Turing \cite{turing1950}. Borisov \cite{borisov1969} gives an example of a group with just 5 generators and 12 relations that has undecidable word problem. Analyzing the original paper by Borisov and a discussion by Collins \cite{collins1972} shows that the group constructed by Borisov is  a tower of repeated HNN-extensions and free products with amalgamation starting with $F_2.$ Considering the normal form of a word in such an extension guaranteed by Britton's Lemma, we see that the only way for an element in the extension to have finite order is if its corresponding element in the original group did -- but $F_2$ is torsion-free. This implies Borisov's group is also torsion-free, a fact that we will use later.
\begin{definition}[Membership problem]
The \textit{membership problem} or \textit{generalized word problem} on a finitely generated group and a set of group elements  $X = \{ w_1, w_2 \ldots \}$ asks for an algorithm that given $g$ in the group decides whether or not $g \in \langle X \rangle.$
\end{definition}
That this is undecidable already for the group $F_k \times F_k$, $k \geq 2$, follows from a clever observation due to Mihailova \cite{mihailova1968}. 
 She reduced the problem to the undecidability of the word problem by means of a fiber product construction, now known as the \textit{Mihailova construction.}
\begin{definition}[Mihailova construction]
Let $G$ be a group generated by $k$ elements. The \textit{Mihailova subgroup} $M(G)$ of $F_k \times F_k$ is 
$$\{ (x,y) \in F_k \times F_k | \; x = y \text{ in } G \}.$$
\end{definition}
It is easy to check that if $G$ is given by a presentation as in $(\ref{pres})$, with $k$ generators and $r$ relators, then $M(G)$ is generated by $k+r$ elements, namely the $(x_i,x_i)$ and the $(1,r_i)$ (see e.g. \cite[Chap. III]{miller1971}). But now the following is immediate.
\begin{theorem}[\cite{mihailova1968}]\label{mem}
Let $n \geq 2.$ The membership problem on $F_n \times F_n$ is undecidable.
\end{theorem}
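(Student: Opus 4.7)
The plan is a short reduction from the word problem to membership in a Mihailova subgroup, exactly as Mihailova's observation suggests.

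First, fix a finitely presented group $G = \langle x_1, \ldots, x_k \mid r_1, \ldots, r_m \rangle$ with undecidable word problem, guaranteed by Novikov--Boone (and made explicit by Borisov's construction with $k=5$). The paragraph preceding the theorem has already granted that the corresponding Mihailova subgroup $M(G) \leq F_k \times F_k$ is finitely generated by the explicit set $\{(x_i, x_i) : 1\leq i\leq k\} \cup \{(1, r_j) : 1\leq j\leq m\}$, a generating set that can be written down algorithmically from the presentation of $G$.

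The crux is the tautological equivalence
$$w =_G 1 \iff (1, w) \in M(G),$$
which is immediate from the definition of $M(G)$ as the set of pairs $(x,y)$ with $x =_G y$. Consequently, any algorithm that decides membership in the finitely generated subgroup $M(G) \leq F_k \times F_k$ would, when fed inputs of the form $(1, w)$, decide the word problem in $G$, contradicting the choice of $G$. This already yields undecidability of membership in $F_k \times F_k$ for the specific $k$ coming from Borisov's group.

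To reach arbitrary $n \geq 2$, I would push $M(G)$ into $F_2 \times F_2$ through the classical embedding $F_k \hookrightarrow F_2$ sending $x_i \mapsto a^i b a^{-i}$, rewriting its finite generating set in the two-letter alphabet; the query vector $(1, w)$ is transported accordingly, and the same argument still reduces the word problem of $G$ to membership. Finally, the diagonal inclusion $F_2 \times F_2 \hookrightarrow F_n \times F_n$ (placing each $F_2$ on the first two generators of the corresponding $F_n$) transfers the result to all $n \geq 2$.

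The only non-routine input is the finite generation of $M(G)$ with the stated generators, a standard fiber-product calculation carried out in Miller's book cited in the excerpt; since this is already granted, no real obstacle remains — the theorem follows by assembling the above reduction.
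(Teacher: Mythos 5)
Your proof is correct and follows essentially the same route as the paper: take Borisov's group $G$, use the finitely generated Mihailova subgroup $M(G)$ with the tautology $w=_G 1 \iff (1,w)\in M(G)$, push everything into $F_2\times F_2$ via $x_i\mapsto a^iba^{-i}$, and transfer to general $n$ by an embedding of free groups. The only (cosmetic) difference is that you pass from $n=2$ to general $n$ via the inclusion $F_2\times F_2\hookrightarrow F_n\times F_n$, whereas the paper invokes the embedding $F_n\hookrightarrow F_2$; both transfers are immediate.
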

\begin{proof}
Since $F_n$ embeds into $F_2,$ it is enough to prove this for $n=2.$ Take a finitely presented group $G$ on $k$ generators with undecidable word problem. Since $F_k \times F_k$ embeds into $F_2 \times F_2,$ $M(G)$ can be viewed as a subgroup of $F_2 \times F_2.$ But $w = 1$ in $G$ if and only if $(w,1) \in M(G)$ which is finitely generated, so an algorithm for the membership problem on $F_2 \times F_2$ would give one for the word problem on $G.$
\end{proof}
We now propose a variation on the Mihailova construction that will lend itself towards statements about centralizers. With the same notation as before, i.e. considering $M(G)$ as a subgroup of $F_2 \times F_2$, we define
$$M_1(G) = \{ (x,y,z) \in F_2 \times F_2 \times F_2| \; x = y, (y,z)  \in M(G) \}. $$ As before, $M_1(G)$ is finitely generated (by  the $(x_i,x_i,x_i)$ and $(1,1,r_i)$) and if $G$ is the group constructed by Borisov, both $M(G)$ and $M_1(G)$ are generated by 17 elements. The usefulness of defining $M_1(G)$ is captured by the following:
\begin{theorem}\label{cent}
Write $F_2 = \langle a,b \rangle$ and pick $w \in F_2 \setminus\{1\}.$ The centralizer of $(a,b,w) \in F_2 \times F_2 \times F_2$ has non-trivial intersection with $M_1(G)$ if and only if $w$ has finite order in $G.$
\end{theorem}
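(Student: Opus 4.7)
The plan is to unwind the two definitions and use the structure of centralizers in the free group. An element of $M_1(G)$ has the shape $(x,x,z)$ with $x=z$ in $G$, so I am looking for non-trivial triples $(x,x,z)$ that commute coordinate-wise with $(a,b,w)$ and satisfy $x=z$ in $G$.

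First I would compute the centralizer of $(a,b,w)$ in $F_2^3$. Since the group is a direct product, the centralizer splits as $C_{F_2}(a)\times C_{F_2}(b)\times C_{F_2}(w)$. Using the classical fact that centralizers in a free group are cyclic, I get $C_{F_2}(a)=\langle a\rangle$, $C_{F_2}(b)=\langle b\rangle$, and (assuming $w\neq 1$, which is given) $C_{F_2}(w)=\langle u\rangle$ for a unique root $u$ of $w$, say $w=u^k$ with $k\neq 0$.

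Next I would impose the constraint $x=x$ (trivially satisfied) together with $x\in\langle a\rangle\cap\langle b\rangle$. Since $a$ and $b$ freely generate $F_2$, a non-trivial power of $a$ cannot equal a power of $b$, so $\langle a\rangle\cap\langle b\rangle=\{1\}$. This forces the first two coordinates to be trivial, and the problem reduces to finding $z\in\langle u\rangle\setminus\{1\}$ with $z=1$ in $G$. Writing $z=u^j$ with $j\neq 0$, the existence of such $z$ is exactly the statement that $u$ has finite order in $G$. Since $w=u^k$ with $k\neq 0$, $u$ has finite order in $G$ if and only if $w$ does (any relation $u^j=1$ in $G$ yields $w^j=1$, and conversely any relation $w^m=1$ yields $u^{km}=1$). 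Combining the two equivalences gives the claimed criterion.

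The only non-routine input is the two free-group facts invoked above, namely that $C_{F_2}(w)$ is cyclic for $w\neq 1$ and that $\langle a\rangle\cap\langle b\rangle=\{1\}$; both are standard. I do not anticipate a genuine obstacle — the real content of the statement is conceptual, in that it packages the question "does $w$ have finite order in $G$?" as a centralizer-intersection problem inside $F_2\times F_2\times F_2$, and this repackaging is exactly what will later feed into the reductions to the stabilizer and upper-right-corner problems.
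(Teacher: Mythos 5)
Your proof is correct and follows essentially the same route as the paper: both arguments kill the first two coordinates because nothing non-trivial commutes with both $a$ and $b$, then use the fact that commuting elements of $F_2$ lie in a common cyclic subgroup to pass torsion in $G$ between the third coordinate and $w$ via their common root. Your explicit identification of the centralizer as $\langle a\rangle\times\langle b\rangle\times\langle u\rangle$ is just a slightly more systematic packaging of the same facts the paper invokes.
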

\begin{proof}
Let $(m,m,n) \in M_1(G)$ commute with $(a,b,w).$ As $(m,n) \in M(G),$ by definition we see that $m = n$ in $G.$ Now $m$ commutes with both $a$ and $b$ in $F_2,$ which implies $m = 1$ in $F_2,$ so $n = 1$ in $G.$ So if $(m,m,n)$ is non-trivial, we must have $n \neq 1$ in $F_2.$ Further, looking at the third component we see $w$ commutes with $n$ in $F_2.$ By the Nielsen-Schreier theorem, this means $\langle n, w \rangle$ is free and abelian, thus cyclic. Let $ \langle w_0 \rangle = \langle n, w \rangle.$ As we know $n \neq 1$ in $F_2,$ this implies $w_0^k = n$ in $F_2$ for some $k \neq 0.$ Hence $w_0^k = 1$ in $G,$ and thus $w$ as a power of $w_0$ is also torsion in $G.$  \\
Conversely, let $w^k = 1$ in $G$ with $k \neq 0$ and $w \neq 1$ in $F_2.$ Then $(1,1,w^k) \in M_1(G)$ commutes with $(a,b,w)$ and is non-trivial. 
\end{proof}

\section{The upper-left-corner problem on matrix groups is undecidable.}

The upper-left-corner problem for $n \times n$ rational matrices can be understood as asking whether or not the vector $\vec{e}_1=(1,0, \ldots,0) \in \mathbb{Q}^n$ can be mapped to its orthogonal complement $\langle \vec{e}_2,\ldots,\vec{e}_n\rangle$ by some element from $\langle X \rangle.$ After applying an appropriate change of basis, the problem reduces to the:
\begin{definition}[External hyperplane problem]
Given a finite subset $X$ in $\mathrm{GL}_n(\mathbb{Q}),$ the \textit{hyperplane problem} asks for an algorithm that decides, given a hyperplane $H \subseteq \mathbb{Q}^n$ and a vector $\vec{v} \in \mathbb{Q}^n$ with $\vec{v} \not\in H,$ whether or not there is $M \in \langle X \rangle$ such that $M \vec{v} \in H.$
\end{definition}
Given an instance of the external hyperplane problem, conjugating by an appropriate matrix will replace $\vec{u}$ with $\vec{e}_1$ and $H$ with $\langle \vec{e}_2,\ldots,\vec{e}_n\rangle.$ The input will then correspond to a conjugate subgroup of $\langle X \rangle,$ and we could directly solve the problem by a hypothetical algorithm for the upper-left-corner problem for groups.
So if we can prove the external hyperplane problem in $\mathbb{Q}^n$ to be undecidable, the same will be true for the upper-left-corner problem on $\mathrm{GL}_n(\mathbb{Q}).$ We are now going to set about to do just that. To this end, we shall construct a suitable embedding of the free group $F_2$ inside $\mathrm{GL}_3(\mathbb{Q})$ and our first step is the following:


\begin{theorem} \label{freesubgr}
There exists a free subgroup $F_2$ of $\mathrm{SL}_2(\mathbb{Z})$ on two generators such that every non-identity element $A$ is \textit{hyperbolic}, i.e. $|\mathrm{Tr}(A)| > 2.$
\end{theorem}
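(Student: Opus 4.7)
The plan is to realize the desired $F_2$ as a Schottky subgroup, via the Ping-Pong Lemma applied to sufficiently high powers of two hyperbolic elements of $\mathrm{SL}_2(\mathbb{Z})$. First I would pick two hyperbolic matrices whose four fixed points on the projective line $\mathbb{R} \cup \{\infty\}$ are pairwise distinct, for example
$$A_0 = \begin{pmatrix} 2 & 1 \\ 1 & 1 \end{pmatrix}, \qquad B_0 = \begin{pmatrix} 1 & 1 \\ 1 & 2 \end{pmatrix},$$
both of trace $3$, with respective fixed point sets $\{(1\pm\sqrt{5})/2\}$ and $\{(-1\pm\sqrt{5})/2\}$ on $\mathbb{R}\cup\{\infty\}$.

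Since each of $A_0$, $B_0$ acts on $\mathbb{R}\cup\{\infty\}$ with north–south dynamics around its attracting and repelling fixed points, I would then choose four pairwise disjoint closed arcs $J_A, J_{A^{-1}}, J_B, J_{B^{-1}}$ containing these four points, and take $n$ large enough that, setting $A := A_0^n$ and $B := B_0^n$, one has the Schottky inclusions
$$g\bigl((\mathbb{R}\cup\{\infty\}) \setminus J_{g^{-1}}\bigr) \subset J_g \qquad \text{for each } g \in \{A^{\pm 1}, B^{\pm 1}\}.$$
By the classical Ping-Pong Lemma, $A$ and $B$ then freely generate a free subgroup of $\mathrm{SL}_2(\mathbb{Z})$.

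To see every non-identity element is hyperbolic, I would run a fixed-point argument on $\mathbb{R}\cup\{\infty\}$. Since the trace is conjugation-invariant, it suffices to treat a cyclically reduced word $w = g_k g_{k-1} \cdots g_1$ with $g_1 \ne g_k^{-1}$. Iterating the Schottky inclusions yields $w\bigl((\mathbb{R}\cup\{\infty\}) \setminus J_{g_1^{-1}}\bigr) \subset J_{g_k}$, and since $J_{g_k}$ is disjoint from $J_{g_1^{-1}}$, the map $w$ sends the closed arc $(\mathbb{R}\cup\{\infty\}) \setminus J_{g_1^{-1}}$ strictly inside itself; Brouwer then supplies a fixed point inside $J_{g_k}$. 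Applying the same reasoning to $w^{-1}$ produces a second fixed point inside $J_{g_1^{-1}}$. Two distinct real fixed points for the Möbius action of $w$ translate into $|\mathrm{Tr}(w)| > 2$, as required.

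The main technical point is really the verification of the Schottky inclusion $g\bigl((\mathbb{R}\cup\{\infty\}) \setminus J_{g^{-1}}\bigr) \subset J_g$ for concretely chosen arcs and a specific power $n$; this is a routine contraction estimate from north–south dynamics, but it needs to be carried out carefully. Once the table-tennis configuration is in place, both freeness and pure hyperbolicity drop out of the same topological fixed-point argument on the projective line.
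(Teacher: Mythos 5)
Your argument is correct, but it takes a genuinely different route from the paper. You build the group dynamically: starting from two hyperbolic matrices whose four fixed points on $\mathbb{R}\cup\{\infty\}$ are distinct, you pass to sufficiently high powers to get the Schottky inclusions, obtain freeness from ping-pong, and extract pure hyperbolicity from a fixed-point argument applied to each cyclically reduced word and to its inverse, producing two distinct fixed points in disjoint arcs. The paper instead exhibits the explicit generators $\left(\begin{smallmatrix}3&2\\1&1\end{smallmatrix}\right)$ and $\left(\begin{smallmatrix}1&1\\2&3\end{smallmatrix}\right)$ (no powers) and invokes a criterion of Purzitsky: two hyperbolic elements of $\mathrm{SL}_2(\mathbb{R})$ whose commutator has trace less than $-2$ generate a Schottky group, hence a free, purely hyperbolic subgroup; verifying the hypothesis is a single trace computation, and the proof of the criterion rests on hyperbolic geometry (the condition forces the axes of the two generators to cross). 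The paper's route buys explicit small generators and avoids all contraction estimates; your route buys self-containedness at the price of unspecified powers $A_0^n,B_0^n$ and the Schottky-inclusion verification that you correctly flag as the remaining technical work. Two small points to make explicit in your write-up: first, in the last step a matrix of $\mathrm{SL}_2(\mathbb{R})$ with two distinct fixed points on the projective line is diagonalizable with real eigenvalues $\lambda,\lambda^{-1}$, and you must exclude $\lambda=\pm 1$, i.e.\ $w=\pm I$; this is immediate because $w$ sends a point lying outside all four arcs into $J_{g_k}$, so it does not act trivially on $\mathbb{R}\cup\{\infty\}$. Second, the same observation (no nonempty reduced word equals $-I$) is what upgrades freeness of the image in $\mathrm{PSL}_2$ to freeness of $\langle A,B\rangle$ in $\mathrm{SL}_2(\mathbb{Z})$ itself.
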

\begin{proof}
The following matrices generate such a free subgroup:
\begin{align*}
\begin{pmatrix}
3 & 2 \\
1 & 1
\end{pmatrix}, \begin{pmatrix}
1 & 1 \\
2 & 3
\end{pmatrix}
\end{align*}
Any pair of hyperbolic matrices $A, B \in \mathrm{SL}_2(\mathbb{R})$ such that $C = ABA^{-1}B^{-1}$ fulfills $\mathrm{Tr}(C) < -2$ would do. A short and elegant proof of this fact can be found in a paper by Purzitsky \cite[Theorem 8]{purzitsky1972} and relies on basic hyperbolic geometry: $\mathrm{PSL}_2(\mathbb{R})$ is the group of isometries of the Lobachevsky upper-half plane and the condition $\mathrm{Tr}(C) < -2$ is equivalent to the requirement that $C$ is hyperbolic and that the axes of $A$ and $B$ intersect. Then $A,B$ will generate a Schottky group, in particular it will be  discrete, free and all its non-identity elements will be  hyperbolic. In fact, almost every pair of matrices would do. See \cite{aoun} for a general result of this kind.
\end{proof}

We now embed this free group inside $\mathrm{SL}_3(\mathbb{Z})$ in such a way that its image preserves an affine cone in $\mathbb{R}^3$.

\begin{theorem} \label{main}
There exists a free subgroup $F$ of $\mathrm{SL}_3(\mathbb{Z})$ on two generators, a rational linear form $f: \mathbb{Q}^3 \to \mathbb{Q}$ and a vector $\vec{u} \in \mathbb{Q}^3$ with the property that for all $g \in F:$
\begin{enumerate}[label=(\arabic*),leftmargin=18pt]
\item $f(g\vec{u}) \geq 0$ and
\item $f(g\vec{u}) = 0 \Leftrightarrow g = 1$.
\end{enumerate}
\end{theorem}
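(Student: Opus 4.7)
My plan is to embed the free subgroup $F_2$ of Theorem \ref{freesubgr} into $\mathrm{SL}_3(\mathbb{Z})$ via the symmetric square representation, exploiting the Lorentzian form $ac - b^2$ that this representation preserves. Concretely, the action $g \cdot S = g S g^T$ of $\mathrm{SL}_2$ on the $3$-dimensional space of symmetric $2\times 2$ matrices $S = \bigl(\begin{smallmatrix}a&b\\b&c\end{smallmatrix}\bigr)$ gives, in the coordinates $(a,b,c)$, a homomorphism $\rho: \mathrm{SL}_2(\mathbb{Z}) \to \mathrm{SL}_3(\mathbb{Z})$ with kernel $\{\pm I\}$. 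Since $F_2$ is torsion-free it cannot contain $-I$, so $\rho$ embeds $F_2$, and $F := \rho(F_2)$ is a free subgroup of rank two inside $\mathrm{SL}_3(\mathbb{Z})$.

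Next I would choose
\[
\vec{u} = (1, 1, 1), \qquad f(a,b,c) = a + c - 2b,
\]
motivated geometrically: $\vec{u}$ corresponds to the rank-one matrix $(1,1)^T(1,1)$ lying on the light cone $\{ac = b^2\}$, and $\{f = 0\}$ is the tangent hyperplane to this cone at $\vec{u}$ (so $\{f \ge 0\}$ contains the positive time sheet of the cone). A direct computation shows that if $g_0 = \bigl(\begin{smallmatrix}p&q\\r&s\end{smallmatrix}\bigr)$, then $\rho(g_0)\vec{u} = ((p+q)^2,\,(p+q)(r+s),\,(r+s)^2)$, and consequently
\[
f\bigl(\rho(g_0)\vec{u}\bigr) = \bigl((p+q) - (r+s)\bigr)^2 \ge 0,
\]
giving property (1) immediately.

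For property (2), the equality $f(\rho(g_0)\vec{u}) = 0$ is equivalent to $g_0 (1,1)^T = (p+q)(1,1)^T$, i.e.\ $(1,1)^T$ is a rational eigenvector of $g_0$. Any rational eigenvalue of $g_0 \in \mathrm{SL}_2(\mathbb{Z})$ is a rational root of the integer polynomial $t^2 - \mathrm{Tr}(g_0)t + 1$, hence equals $\pm 1$, which forces $|\mathrm{Tr}(g_0)| = 2$. But Theorem \ref{freesubgr} guarantees $|\mathrm{Tr}(g_0)| > 2$ for every non-identity $g_0 \in F_2$, so $g_0 = I$ and hence $g = \rho(g_0) = I$ in $F$. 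I do not anticipate serious obstacles; the only step that calls for real thought is the geometric choice of $\vec{u}$ and $f$. Once $\vec{u}$ is placed on the light cone and $f$ is chosen to be the tangent hyperplane there, $f \circ \rho(\cdot)\vec{u}$ is automatically a non-negative perfect square, and the classical irrationality of hyperbolic eigenvalues in $\mathrm{SL}_2(\mathbb{Z})$ takes care of the rest.
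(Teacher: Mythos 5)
Your proof is correct and follows essentially the same route as the paper: a three-dimensional representation of $\mathrm{SL}_2(\mathbb{Z})$ preserving a quadratic cone, with $\vec{u}$ on the cone and $f$ cutting out the tangent hyperplane there, and hyperbolicity of the non-identity elements ruling out rational eigenvectors (the paper uses the isomorphic conjugation action on traceless matrices rather than $S\mapsto gSg^T$ on symmetric ones). Your perfect-square identity $f(\rho(g_0)\vec{u})=\bigl((p+q)-(r+s)\bigr)^2$ is a pleasant shortcut that makes property (1) immediate, where the paper instead argues via connectedness that each nappe of the cone is preserved.
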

\begin{proof}
Consider the surface in three dimensions given by $x^2+yz = 0.$ This is the union of the origin and two disjoint open cones defined by the conditions $y > z$ and $y < z$ respectively, see Figure \ref{cone}. We firstly construct a free subgroup on two generators of matrices in $\mathrm{SL}_3(\mathbb{Z})$ that preserves each of the two cones separately. To achieve this, we interpret a vector on the cone as a matrix in $ \begin{pmatrix}
x & y \\ z & -x
\end{pmatrix} \in M_2(\mathbb{R})$ with zero determinant and zero trace. The conjugation action of $\mathrm{SL}_2(\mathbb{R})$ on $M_2(\mathbb{R})$ preserves these two properties, so keeps the vector on the double cone.\\

\noindent \textbf{Claim:} This action preserves each cone $ \{ y < z \} $ and $ \{ y > z \}.$

\smallskip
\noindent \emph{Proof:} It is easy to calculate this directly by computing the effect of the conjugation action on vectors with $y < z$ and $y > z$ respectively. Alternatively, a quicker way is to notice that $\mathrm{SL}_2(\mathbb{R})$ is connected, so if $\vec{u} \neq 0$ is a vector on the cone and $g \in \mathrm{SL}_2(\mathbb{R}),$ there is a path between $\vec{u}$ and $g\vec{u}$ that lies in $\mathrm{SL}_2(\mathbb{R})\vec{u}.$ But as every such $g$ is invertible, such a path cannot pass through the origin, proving the claim. \\

This gives a representation of $\mathrm{SL}_2(\mathbb{R})$ into $\mathrm{GL}_3(\mathbb{R})$ which we can explicitly calculate as follows:
Associate $\begin{pmatrix}
x & y \\ z & -x
\end{pmatrix}$ with $(x,y,z)$ and the matrix corresponding to conjugation by $\begin{pmatrix}
a & b \\ c & d
\end{pmatrix}$ can be calculated by looking at its effect on the basis vectors 
\begin{align*}
\begin{pmatrix}
1 & 0 \\ 0 & -1
\end{pmatrix}, \begin{pmatrix}
0 & 1 \\ 0 & 0
\end{pmatrix}, \begin{pmatrix}
0 & 0 \\ 1 & 0
\end{pmatrix}.
\end{align*}
Checking what conjugation does to these matrices, we see that the representation can be written as
\begin{align}
\phi: \mathrm{SL}_2(\mathbb{R}) &\to \mathrm{GL}_3(\mathbb{R}) \nonumber \\
\begin{pmatrix}
a & b \\ c & d
\end{pmatrix} &\mapsto \begin{pmatrix}
ad +bc & -ac & bd \\
-2ab & a^2 & -b^2 \\
2cd & -c^2 & d^2
\end{pmatrix} \tag{\textdagger}
\end{align}
We check that $\phi$ is a group homomorphism with values in $\mathrm{SL}_3(\mathbb{R})$. Also $$\phi(\mathrm{SL}_2(\mathbb{Z})) \subset \mathrm{SL}_3(\mathbb{Z}).$$

Set  $\vec{u} = (0,1,0).$
\begin{figure} [h!]
 \centering
 \includegraphics[width=0.6\textwidth]{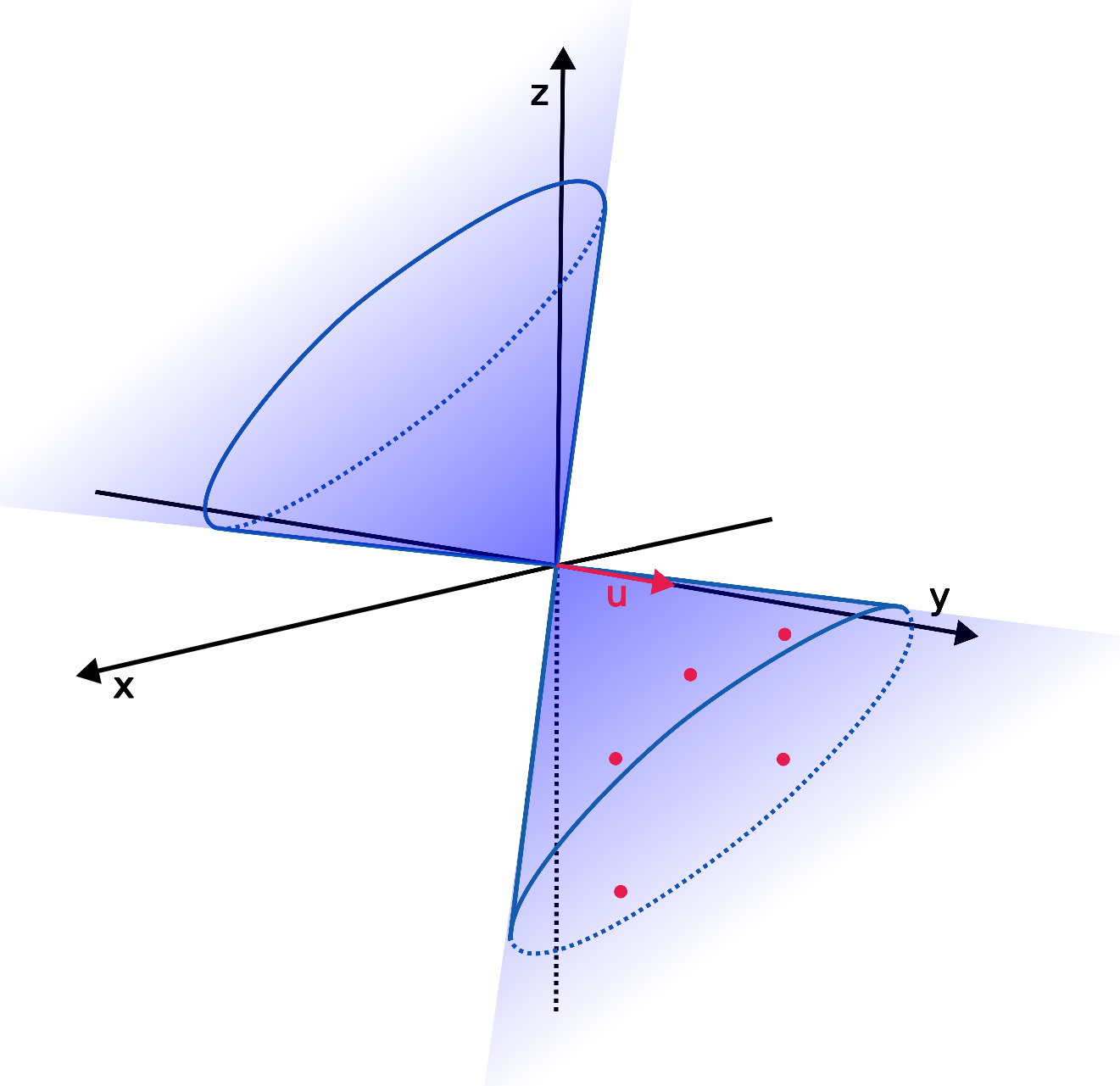}
    \caption{The double cone with the vector $\vec{u}$ and some elements of its orbit under $F.$}  \label{cone}
    \end{figure} We have $Q(\vec{u}) = 0$ for the quadratic form $Q(\vec{x}) = -(x^2+yz)$ whose zero set is our double cone. The associated symmetric bilinear form $B$ is $B(\vec{x}, \vec{y}) = \frac{1}{2}(Q(\vec{x}) + Q(\vec{y}) - Q(\vec{x}-\vec{y})).$
We now claim that the linear form given by $f(\vec{x}) = B(\vec{u}, \vec{x})$ and the hyperplane $H = \{ \vec{x} \; | \; B(\vec{u}, \vec{x}) = 0 \}$ fulfill the required properties.
Simplifying the equation for the hyperplane we get
\begin{align*}
B(\vec{u}, \vec{x}) = 0 \Leftrightarrow
Q(\vec{x}) = Q(\vec{x}-\vec{u}) \Leftrightarrow
x^2+yz = x^2 + (y-1)z \Leftrightarrow
z = 0.
\end{align*}
So $H$ is the $xy$-plane: this is precisely the plane tangent to the double cone that contains $\vec{u}.$ Let $F=\phi(F_2)$ where $F_2\leq \mathrm{SL}_2(\mathbb{Z})$ is the subgroup given by Theorem \ref{freesubgr}. We claim that $F, \vec{u}$ and $f$ fulfill the conditions. Pick a $g \in F.$ Now  by the previous calculation, $f(g\vec{u}) > 0$ holds if and only if
\begin{align*}
B(\vec{u}, g\vec{u}) > 0
\Leftrightarrow (g\vec{u})_z < 0,
\end{align*}
which simply means that $g\vec{u}$ is on the same side of the hyperplane as the cone that $\vec{u}$ is in -- but because each cone is preserved by $g,$ this always happens for any $g$ unless $g\vec{u}$ is proportional to $\vec{u},$ in which case $f(g\vec{u}) = 0.$ So condition $(1)$ is shown. 
Now we want to show that if $f(g\vec{u}) = 0,$ it must be that $g = 1.$
We are able to explicitly calculate this using $(\dagger).$ If $\vec{u}$ is mapped to a multiple of itself then
\begin{align*}
\begin{pmatrix}
ad +bc & -ac & bd \\
-2ab & a^2 & -b^2 \\
2cd & -c^2 & d^2
\end{pmatrix}\begin{pmatrix}
0 \\ 1 \\ 0
\end{pmatrix} = \begin{pmatrix}
0 \\ q \\ 0
\end{pmatrix} \Rightarrow \begin{pmatrix}
-ac \\ a^2 \\ -c^2
\end{pmatrix} = \begin{pmatrix}
0 \\ q \\ 0
\end{pmatrix} \Rightarrow c = 0, a = \pm\sqrt{q}.
\end{align*}
Since $g=\phi(\gamma)$ with $\gamma \in \mathrm{SL}_2(\mathbb{Z})$ we must have $\det{\gamma} = ad = 1,$ thus $q = \pm 1$, $(a,d) = \pm (1,1)$ and $\mathrm{Tr}(\gamma) = \pm 2.$ But by design all non-identity elements in $F_2$ are hyperbolic, i.e. have trace with modulus larger than 2. We conclude that $g = 1.$ 
\end{proof}
Exploiting the key feature of the above construction that the orbit of $\vec{u}$ remains on one side of the hyperplane, we can now finalize the proof of Theorem \ref{ULCP}.
\begin{proof}[Proof of Theorem \ref{ULCP}]
Without loss of generality we can assume that $n=6.$ As stated above, it suffices to prove the undecidability of the external hyperplane problem. We will reduce it to the undecidability of the membership problem for $F_2 \times F_2,$ so our starting point is a finite set $X$ in $F_2 \times F_2$ for which we know the membership problem to be undecidable (such $X$ is constructed explicitly in Theorem \ref{mem} and with the group from \cite{borisov1969} this gives an $X$ with $|X|\leq 17$). We pick two copies of each of the objects whose existence was guaranteed by Theorem \ref{main}, say $F^{(1)}, F^{(2)}, \vec{u_1}, \vec{u_2}$ and $f_1, f_2,$ and we define $$\vec{u} = \begin{pmatrix}
\vec{u}_1 \\ \vec{u}_2
\end{pmatrix} \in \mathbb{Z}^6.$$ Let $H \subseteq \mathbb{Q}^6 $ be the hyperplane defined as the kernel of the linear form $f_1(\vec{x}) + f_2(\vec{y})$, where $ (\vec{x}, \vec{y})  \in \mathbb{Q}^6.$   Now  $F_2 \times F_2 \simeq F^{(1)} \times F^{(2)}$ embeds in $\mathrm{GL}_6(\mathbb{Q})$ via the map  $$(g^{(1)},g^{(2)}) \mapsto \begin{pmatrix}
g^{(1)} & 0 \\
0 & g^{(2)}
\end{pmatrix}. $$
Suppose we are given an instance of the membership problem $g = (g^{(1)},g^{(2)}) \in F_2 \times F_2,$ for which we would like to decide whether or not it is in $\langle X \rangle.$
A hypothetical algorithm for the external hyperplane problem would allow to decide this. Indeed we would ask whether or not some group element $h$ from $\langle X \rangle$ exists that  maps $\vec{v} := g^{-1}\vec{u}$ into $H.$ By positivity of the $f_i$ (property (1)), this happens if and only if $f_i(hg^{-1}u_i) = 0$ for both $i = 1,2.$ By property (2) of $f_1,$ this holds if and only if $hg^{-1}=1$, i.e. if $g \in \langle X \rangle.$ This is in contradiction with Theorem \ref{mem} and ends the proof.
\end{proof}
\section{The upper-right-corner problem on matrix groups is undecidable.}

Recall Dixon's \textit{stabilizer problem} \cite{dixon1985} mentioned in the introduction:
\begin{definition}[Stabilizer problem]
Given a finitely generated subgroup $G$ of $\mathrm{GL}_n(\mathbb{Q})$ and $u \in \mathbb{Q}^n,$ the \textit{stabilizer problem} asks whether  $\mathrm{Stab}_X(u) = \{ g \in G| \; gu = u \}$ is the trivial group.
\end{definition}
We now prove that this problem is undecidable (Theorem \ref{stabilizer}). In fact we prove that the analogous problem over $\mathbb{Z}$ is already undecidable. Later, we will use a crucial step in the proof to prove the undecidability of the upper-right-corner problem.
\begin{proof}[Proof of Theorem \ref{stabilizer}]
Let $G$ be Borisov's torsion-free finitely presented group that has undecidable word problem, and $M_1(G) \leq F_2 \times F_2 \times F_2$ the variant of the Mihailova construction described in Section 2, which is 17-generated. Embed $F_2$ into $\mathrm{SL}_2(\mathbb{Z}),$ so $M_1(G)$ acts by conjugation on triples of $2 \times 2$-matrices with zero trace, which is a vector space of dimension $9.$ The stabilizer of $(2a-\mathrm{Tr}(a)I_2, 2b-\mathrm{Tr}(b)I_2, 2c-\mathrm{Tr}(c)I_2$ is the centralizer of $(a,b,c) \in F_2 \times F_2 \times F_2.$
In particular, this vector has a non-trivial stabilizer in $M_1(G)$ if and only if the centralizer of $(a,b,c)$ in $F_2 \times F_2 \times F_2$ has non-trivial intersection with $M_1(G).$ But we saw in Theorem \ref{cent} that this happens if and only if the  word $c$ has finite order in $G.$ So we could use a hypothetical algorithm solving the stabilizer problem (over $\mathbb{Q}$ or even $\mathbb{Z}$) to decide if a word in $G$ has finite order or not. Since our $G$ is torsion-free, this would mean being able to solve the word problem on it.
\end{proof}
Combining ideas from the proofs of Theorem \ref{main} and Theorem \ref{stabilizer}, we shall now prove the undecidability of the upper-right-corner problem on matrix groups.  This reduces to a problem very similar to the external hyperplane problem: 
\begin{definition}[Internal hyperplane problem]
Given a finite subset $X$ in $\mathrm{GL}_n(\mathbb{Q}),$ the \textit{internal hyperplane problem} asks to decide, given a hyperplane $H \subseteq \mathbb{Q}^n$ and a vector $\vec{v} \in \mathbb{Q}^n$ with $\vec{v} \in H,$ whether or not there is $M \in \langle X \rangle\setminus\{1\}$ such that $M \vec{v} \in H.$
\end{definition}
To prove the undecidability of the internal hyperplane problem, we will essentially use the same geometric argument as before, but it will require some extra work. 
\begin{proof}[Proof of Theorem \ref{URCP}]
Without loss of generality we may take $n = 9.$ We saw the following in the proof of the undecidability of the stabilizer problem: there is a certain finitely generated subgroup $\Gamma$ of $F_2 \times F_2 \times F_2$ (namely $M_1(G)$ from before), for which no algorithm exists that given an element $w \in F_2 \times F_2 \times F_2$ will decide whether the intersection of its centralizer with $\Gamma,$ denoted by $C_\Gamma(w),$ is trivial or not. This is the problem we wish to reduce the internal hyperplane problem to. 

To this end, we again pick an $F_2$ in $\mathrm{SL}_2(\mathbb{Z})$ as guaranteed in Theorem \ref{freesubgr}, i.e. one with all non-identity elements hyperbolic. Then, we realize it in $\mathrm{GL}_3(\mathbb{Q})$ via the same representation $\phi$ as before. The aim before was to pick a $\vec{u}$ on a cone and make the group act on it in such a way that it would only remain in the hyperplane only if the acting group element satisfied a certain property (in that case, was trivial). Now, we will need a bit more care in selecting the vector. Given a fixed $A \in \mathrm{SL}_2(\mathbb{Z}),$ we will pick it as follows: The matrix $\phi(A)$ has at least one eigenvector $\vec{v}_A=(x,y,z)$ with eigenvalue $\lambda \neq 1,$ as $A$ is hyperbolic. This vector lies on the cone $x^2+yz = 0$ from before. Indeed if $$M= \begin{pmatrix}
x & y \\ z & -x
\end{pmatrix}$$ and $AMA^{-1} = \lambda M$ with $\lambda \neq 1$, then $\det M = 0.$ Now, let $\vec{u} = \vec{v}_A$ and observe that $A$ and any matrix in the centralizer  of $A$ must fix $\mathbb{R}\vec{v}_A.$ We want to show that the converse also holds, i.e. any matrix fixing $\mathbb{R}\vec{v}_A$ must also commute with $A.$ This follows from a handy fact: 

\smallskip
\noindent \textbf{Claim:} Let $B_0 \subset \mathrm{SL}_2(\mathbb{R})$ be the subgroup of upper-triangular matrices. The set $\{ M \in \mathrm{SL}_2(\mathbb{R})| \; \phi(M)\vec{v}_A \in \mathbb{R}\vec{v}_A \}$ is a conjugate subgroup of $B_0$ that contains $A.$

\smallskip
\noindent \emph{Proof:} Since $\mathrm{SL}_2(\mathbb{R})$ acts transitively on the cone via $\phi$, conjugating appropriately, without loss of generality we may assume that $A \in B_0, \vec{v}_A = (0,1,0).$ The claim follows instantly after computing the stabilizer by hand: it is precisely $B_0.$ \\

Now, this means that the centralizer $C_{F_2}(A)$ of $A$ in $F_2$  is contained in a conjugate subgroup of $B_0,$  which we call $B$. Note that $B_0$ (and hence $B$) is solvable (its derived subgroup is abelian). So by the Nielsen-Schreier theorem, $F_2 \cap B$ is free and solvable and hence cyclic, generated say by $M$. This implies $$C_{F_2}(A) \leq F_2 \cap B = \langle M \rangle \Rightarrow A = M^k$$ for some $k \in \mathbb{Z}\setminus\{0\}.$ As $A$ and $M$ clearly commute, we see that in fact $C_{F_2}(A) = \langle M \rangle = F_2 \cap B.$ So letting $H$ be the hyperplane tangent to the cone along the new $\vec{u} = \vec{v}_A$ this time, we obtain a situation similar to that of Theorem \ref{main}: for a fixed $A \in F_2$ we get a free subgroup $F$ of $\mathrm{SL}_3(\mathbb{Z})$ on two generators, a rational linear form $f: \mathbb{Q}^3 \to \mathbb{Q}$ and a vector $\vec{v}_A \in \mathbb{Q}^3$ with the property that for all $g \in F:$
\begin{enumerate}[label=(\arabic*),leftmargin=18pt]
\item $f(g\vec{v}_A) \geq 0$ and
\item $f(g\vec{v}_A) = 0 \Leftrightarrow g \in C_{F_2}(A)$.
\end{enumerate}
Finally, we will prove that an algorithm for the internal hyperplane problem would be able to detect the triviality of $C_\Gamma(w)$ as described above. 
Similarly as before, we shall pick three copies of the objects just constructed, with each component of $w= (A,A',A'')$ playing the role of $A$ above: $F^{(1)}, F^{(2)}, F^{(3)},$ with corresponding linear forms $f_1,f_2,f_3$ and vectors $\vec{v}_A, \vec{v}_{A'}, \vec{v}_{A''}.$
Now  as before $F_2 \times F_2 \times F_2 \simeq F^{(1)} \times F^{(2)} \times F^{(3)}$ embeds in $\mathrm{GL}_9(\mathbb{Q})$ via $$(g^{(1)},g^{(2)},g^{(3)}) \mapsto \begin{pmatrix}
g^{(1)} & 0 & 0 \\
0 & g^{(2)} & 0 \\
0 & 0 & g^{(3)}
\end{pmatrix}, $$
and we let $H$ be the kernel of $f_1(\vec{x}) + f_2(\vec{y}) + f_3(\vec{z}),$ where $(\vec{x} , \vec{y}, \vec{z})  \in \mathbb{Q}^9.$ By the positivity properties of the $f_i,$ a non-identity matrix from $\Gamma$ maps $(\vec{v}_A, \vec{v}_{A}, \vec{v}_{A''}) \in H$ to $H$ if and only if $f_1(\vec{v}_A) = f_2(\vec{v}_{A'}) = f_3(\vec{v}_{A''}) = 0.$ But by the construction of $f,$ this occurs if and only if $C_X(w)$ is non-trivial. This ends the proof.
\end{proof}

\noindent \emph{Final remarks}. It is natural to ask to what extent the $n\ge 6$ (resp. $n \ge 9$) requirement on the size of the matrices can be reduced, or if we can take fewer than $17$ matrices. Whether a single matrix would suffice in the case of the upper-right-corner problem is a reformulation of the so-called bi-Skolem problem \cite{halava2005,biskolem}. Its undecidability would imply that of the original Skolem problem. Also, it would be interesting to know whether Theorems \ref{ULCP} and \ref{URCP} continue to hold for integer matrices (Theorem \ref{stabilizer} does by our proof), and to determine whether one can improve these results by showing that no algorithm exists that given a finitely generated subgroup of $\mathrm{GL}_n(\mathbb{Q})$ can decide whether it contains a matrix with at least one zero entry on the diagonal, or respectively a non-identity matrix with at least one zero entry. \\

\paragraph{\textbf{Acknowledgement.}}
The authors thank the Department of Pure Mathematics and Mathematical Statistics at the University of Cambridge for funding the summer research project that led to this work.

\bibliographystyle{vancouver}

\bibliography{monthlyrefs.bib}

\end{document}